\newtheorem{thm}{Theorem}[section]
\newtheorem{lem}{Lemma}[section]
\newtheorem{conj}{Conjecture}[section]
\newtheorem{prob}{Problem}[section]
\theoremstyle{definition}
\theoremstyle{remark}
\numberwithin{equation}{section}
\begin{document}

\begin{frontmatter}
	
\title{On the Ilmonen-Haukkanen-Merikoski Conjecture}

\author[gazi]{Ercan Alt\i n\i \c{s}\i k \corref{corresponding}}
\ead{ealtinisik@gazi.edu.tr}
\author[gazi]{Ali Keskin }
	\ead{akeskin1729@gmail.com}
\author[gazi]{Mehmet Y\i ld\i z }
\ead{yildizm78@mynet.com}
\author[bilkent]{Murat Demirb\"{u}ken }
\ead{murdem91@gmail.com}

\cortext[corresponding]{Corresponding author. Tel.: +90 312 202 1070}
\address[gazi]{Department of Mathematics, Faculty of Sciences, Gazi University \\ 06500 Teknikokullar - Ankara, Turkey}
\address[bilkent]{Department of Computer Science, Faculty of Engineering, \.{I}. Do\u{g}ramac\i \ Bilkent University \\ 06800 Bilkent - Ankara, Turkey}

\begin{abstract}
Let $K_n$ be the set of all $n\times n$ lower triangular (0,1)-matrices with each diagonal element equal to $1$, $L_n = \{ YY^T: Y\in K_n\}$ and let 
\begin{equation*}
c_n = \min_{Z\in L_n} \left\lbrace \mu_n^{(1)}(Z):\mu_n^{(1)} (Z) \text{ is the smallest eigenvalue of } Z \right\rbrace.
\end{equation*}
The Ilmonen-Haukkanen-Merikoski conjecture (the IHM conjecture) states that $c_n$ is equal to the smallest eigenvalue of $Y_0Y_0^T$, where 
\begin{equation*}
	(Y_0)_{ij}=\left\lbrace \ \begin{array}{cl}
		0 & \text{if } \ i<j, \\ 
		1 & \text{if } \ i=j,\\
		\frac{1-(-1)^{i+j}}{2} & \text{if } \ i>j.  
	\end{array}
	\right.
\end{equation*}  
In this paper we present a proof of this conjecture. In our proof we use an inequality for spectral radii of nonnegative matrices. 
\end{abstract}

\begin{keyword}
	GCD matrix \sep (0,1)-matrix \sep positive matrix \sep eigenvalue \sep spectral radius  \sep Fibonacci number 
	\MSC[2010] 15A18, 15A23, 15B36, 15B48, 11C39	
\end{keyword}

\end{frontmatter}

\section{Introduction and Statement of the Main Theorem}
Let $S=\{x_1,x_2,\ldots,x_n\}$ be a set of distinct positive integers, $(x_i,x_j)$ denote the greatest common divisor of $x_i$ and $x_j$ and let $\varepsilon$ be a positive real number. The $n \times n$ matrices $(S)=((x_i,x_j))$ and $(S^\varepsilon)=((x_i,x_j)^\varepsilon)$ are called the GCD matrix and the power GCD matrix on $S$, respectively. In 1876, Smith \cite{Smith} proved that if $S$ is factor closed, then
$\det(S)=\prod_{k=1}^n \varphi(x_k)$, where $\varphi$ is Euler's totient. Since
then many results on these matrices have been published in the
literature, see e.g. \cite{Altinisik2005, Altinisik2004,BeslinLigh,BourqueLigh1992,Hauk1997,KorkeeHauk2003}. 

One of interesting and active area in the study of GCD matrices is their eigenstructure. The first results on this subject were published in the papers \cite{Wintner,Lindqvist}. Since these results are consequences of some theorems on Riesz bases in functional analysis, the paper of Hong and Loewy \cite{HongLoewy2004} can be considered as the first paper on the study of the eigenvalues of GCD and related matrices due to the number theoretical aspect of the subject. Since their pioneering paper many results on the subject have been published in the literature, see e.g. \cite{Altinisik2009,AltinisikBuyukkose2014,AltinisikBuyukkose2015,Hong2008,HongEnochLee,HongLoewy2011,IlmonenHaukkanenMerikoski,Mattila2015,Mattila2012,Mattila2012paper,MattilaHauk2014}. 
In that paper Hong and Loewy investigated the asymptotic behavior of the eigenvalues of power GCD matrices by using some tools of number theory. Beside their results on asymptotic behavior of these matrices, in the same paper Hong and Loewy introduced a constant $c_n$ and used it to present a lower bound for the smallest eigenvalues of power GCD matrices. Let $K_n$ be the set of all $n\times n$ lower triangular (0, 1)-matrices which each diagonal element equal to $1$ and let $L_n = \{ YY^T: Y\in K_n\}$. They defined $c_n$ depending only on $n$ as follows:
\begin{equation} \label{constantc}
	c_n = \min_{Z\in L_n} \left\lbrace \mu_n^{(1)}(Z):\mu_n^{(1)} (Z) \text{ is the smallest eigenvalue of } Z \right\rbrace.
\end{equation}
Then they prove that 
\begin{equation*}
	\lambda_n^{(1)} ((S^\varepsilon)) \geq c_n \cdot \min_{1\leq i \leq n} \{ J_\varepsilon(x_i) \},
\end{equation*}
where $J_\varepsilon$ is Jordan's generalization of Euler's totient and $\lambda_n^{(1)} ((S^\varepsilon))$ is the smallest eigenvalue of the power GCD matrix $(S^\varepsilon)$, see \cite[Theorem~4.2]{HongLoewy2004}.

In 2008, in the light of their MATLAB calculations for $n=2,3,\ldots,7$, Ilmonen, Haukkanen and Merikoski \cite{IlmonenHaukkanenMerikoski} presented an interesting conjecture about the constant $c_n$.  
\begin{conj} \label{conjecture} [The IHM conjecture, see \cite{IlmonenHaukkanenMerikoski}, Conjecture 7.1]
Let $Y_0 =(b_{ij}) \in M_n(\mathbb{R}) $  be defined by 		
	\begin{equation} \label{Y0Matrix}
	b_{ij}=\left\lbrace \ \begin{array}{cl}
	0 & \text{if } \ i<j, \\ 
	1 & \text{if } \ i=j,\\
	\frac{1-(-1)^{i+j}}{2} & \text{if } \ i>j.  
	\end{array}
	\right.
	\end{equation}
Then $c_n$ is equal to the smallest eigenvalue of $Y_0Y^T_0.$
\end{conj}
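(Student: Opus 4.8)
The plan is to recast the extremal problem \eqref{constantc} as a spectral-norm maximisation over the inverse matrices $Y^{-1}$. Since each $Z=YY^{T}\in L_n$ is positive definite (as $\det Y=1$), its smallest eigenvalue is
\begin{equation*}
\mu_n^{(1)}(YY^{T})=\frac{1}{\bigl\|Y^{-1}\bigr\|_2^{2}},
\end{equation*}
where $\|\cdot\|_2$ is the spectral norm; indeed $(YY^{T})^{-1}=(Y^{-1})^{T}Y^{-1}$ has largest eigenvalue $\|Y^{-1}\|_2^{2}$. Because $Y_0\in K_n$, the number $\mu_n^{(1)}(Y_0Y_0^{T})$ is one of the values competing in \eqref{constantc}, so $c_n\le\mu_n^{(1)}(Y_0Y_0^{T})$ holds for free. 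Hence the entire conjecture reduces to proving the single inequality $\bigl\|Y^{-1}\bigr\|_2\le\bigl\|Y_0^{-1}\bigr\|_2$ for every $Y\in K_n$.

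To prove this I would run the chain
\begin{equation*}
\bigl\|Y^{-1}\bigr\|_2\ \le\ \bigl\|\,|Y^{-1}|\,\bigr\|_2\ \le\ \bigl\|\,|Y_0^{-1}|\,\bigr\|_2\ =\ \bigl\|Y_0^{-1}\bigr\|_2,
\end{equation*}
where $|A|$ denotes the entrywise absolute value. The first inequality is the elementary fact $\|A\|_2\le\|\,|A|\,\|_2$. The final equality is the decisive special property of $Y_0$: one checks that $(Y_0^{-1})_{ij}$ has sign $(-1)^{i+j}$, so with the signature matrix $D=\mathrm{diag}\bigl((-1)^1,\dots,(-1)^n\bigr)$ one has $D\,Y_0^{-1}D=|Y_0^{-1}|$, and since $D$ is orthogonal this conjugation preserves the spectral norm. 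The middle inequality is exactly where the announced inequality for spectral radii of nonnegative matrices is used: from $0\le|Y^{-1}|\le|Y_0^{-1}|$ entrywise one gets $0\le|Y^{-1}|^{T}|Y^{-1}|\le|Y_0^{-1}|^{T}|Y_0^{-1}|$ entrywise, whence $\rho\bigl(|Y^{-1}|^{T}|Y^{-1}|\bigr)\le\rho\bigl(|Y_0^{-1}|^{T}|Y_0^{-1}|\bigr)$ by monotonicity of the Perron root, i.e. $\|\,|Y^{-1}|\,\|_2\le\|\,|Y_0^{-1}|\,\|_2$. Thus everything reduces to the entrywise domination $|Y^{-1}|\le|Y_0^{-1}|$.

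Proving $\bigl|(Y^{-1})_{ij}\bigr|\le\bigl|(Y_0^{-1})_{ij}\bigr|$ for all $Y\in K_n$ is the core of the argument and, I expect, the main obstacle. Fix a column index $j$ and set $v_m=(Y^{-1})_{j+m,\,j}$. From $YY^{-1}=I$ and $Y_{ii}=1$ one obtains the recursion $v_m=-\sum_{0\le l<m}Y_{j+m,\,j+l}\,v_l$ with $v_0=1$; since $Y_{j+m,\,j+l}\in\{0,1\}$, the quantity $v_m$ is the negated sum of an \emph{arbitrary} subset of the previously generated values $v_0,\dots,v_{m-1}$. The naive bound $|v_m|\le\sum_{l<m}|v_l|$ does not close an induction against the Fibonacci sequence (it loses exactly one index), so the cancellation forced by the alternating signs must be exploited. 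The device I would use is to carry the two partial sums
\begin{equation*}
P_m=\sum_{\substack{0\le l\le m\\ v_l>0}}v_l,\qquad N_m=\sum_{\substack{0\le l\le m\\ v_l<0}}|v_l|,
\end{equation*}
and to prove by induction the twin invariant $\max(P_m,N_m)\le f_{m+1}$ and $P_m+N_m\le f_{m+2}$, where $f_0=f_1=f_2=1$ and $f_m=f_{m-1}+f_{m-2}$ for $m\ge3$.

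The induction should close cleanly. A \emph{positive} value $v_m$ can only arise as the negation of a sum of negative predecessors, so $v_m\le N_{m-1}$ (and symmetrically $|v_m|\le P_{m-1}$ when $v_m<0$); in either case $|v_m|\le\max(P_{m-1},N_{m-1})\le f_m$, which is precisely the bound $|(Y^{-1})_{j+m,j}|\le f_m$. Updating only one of $P,N$ by at most the other then gives $\max(P_m,N_m)\le P_{m-1}+N_{m-1}\le f_{m+1}$ and $P_m+N_m\le f_{m+1}+f_m=f_{m+2}$, so the invariant propagates. Finally, the entries of $Y_0^{-1}$ satisfy the very same column recursion, which identifies $|(Y_0^{-1})_{ij}|$ with $f_{i-j}$ --- this is where the Fibonacci numbers of the abstract enter --- and completes the entrywise domination, and with it the conjecture. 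The one genuinely delicate point is the bookkeeping that pins the sign-aligned choice (take all positive, or all negative, predecessors) as the extremal one; the remaining estimates are routine.
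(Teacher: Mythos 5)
Your proposal is correct, and its overall architecture coincides with the paper's: reduce everything to the entrywise domination $|Y^{-1}|\leq|Y_0^{-1}|$ via a Fibonacci bound on the entries of $Y^{-1}$, pass to spectral radii by monotonicity of the Perron root on nonnegative matrices (the paper cites Horn--Johnson, Theorem 8.1.18, for exactly the step you invoke), and then use the checkerboard sign pattern of $Y_0^{-1}$ to convert the bound on $\rho(|Z_0^{-1}|)$ back into one on $\rho(Z_0^{-1})$. Two of your ingredients are, however, genuinely different in execution and arguably cleaner. First, for the key estimate $|(Y^{-1})_{i j}|\leq F_{i-j}$ the paper runs an induction with a case analysis on the last one or two subdiagonal entries of $Y$, repeatedly asserting that certain partial sums ``have the same values'' as earlier inverse entries; your potential-function induction, carrying the positive mass $P_m$ and negative mass $N_m$ with the twin invariant $\max(P_m,N_m)\leq f_{m+1}$ and $P_m+N_m\leq f_{m+2}$, closes the same induction without that informal identification and makes the source of the Fibonacci recursion transparent (a positive new value can only cannibalize the accumulated negative mass, and vice versa). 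Second, where you observe that $D\,Z_0^{-1}D=|Z_0^{-1}|$ for the signature matrix $D=\mathrm{diag}((-1)^1,\dots,(-1)^n)$, so that $\rho(|Z_0^{-1}|)=\rho(Z_0^{-1})$ by orthogonal similarity, the paper instead shows that $Z_0^{-1}$ and $|Z_0^{-1}|$ have equal traces of all powers and appeals to Newton's identities to conclude they share a characteristic polynomial; your similarity argument is shorter and gives more (it matches the full spectra directly). Your framing via $\mu_n^{(1)}(YY^T)=\|Y^{-1}\|_2^{-2}$ rather than via the entries of $Z^{-1}=(Y^{-1})^TY^{-1}$ is only a cosmetic difference, since the entrywise inequality $|Y^{-1}|^T|Y^{-1}|\leq|Y_0^{-1}|^T|Y_0^{-1}|$ you use is exactly the content of the paper's Theorem 2.2. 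The one point you leave as an assertion --- that the $Y_0$-recursion produces $|(Y_0^{-1})_{i j}|=F_{i-j}$ exactly --- is verified in the paper by the identities $\sum_{s=1}^{k/2}F_{2s-1}=F_k$ and $1+\sum_{s=1}^{(k-1)/2}F_{2s}=F_k$, and is routine.
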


Numerical evidence of the IHM conjecture as follows. Recently, the first, the second and the fourth author of the paper have investigated that the IHM conjecture holds for $n=7$ and $n=8$ with the help of a MATLAB code. 
Our MATLAB code running on a computer\footnote{Intel Core i7-920 Quad Core 2.66 GHz 8 MB L Cache 24 GB DDR3 RAM} has verified the truth of the IHM conjecture for $n=7$ in 23 minutes and for $n=8$ in 3.5 days. Since $|L_8|=2^{28}=268,435,456$ 
and $|L_9|=2^{36}=68,719,476,736$, it would take about $3$ years to verify the IHM conjecture for $n=9$ with the help of our MATLAB code. To overcome this difficulty about time, we write a different code in C programming language. We use Newton's identities (see \cite{Kalman}) to obtain the characteristic polynomial of a matrix $Z$ in $L_n$ and we calculate the smallest eigenvalue of $Z$ by using Newton's method (see \cite{SuliMayers}) to shorten the running time. Indeed, our C code running on the same computer has verified the truth of the IHM conjecture in 30 minutes for $n=8$ and in 7 days for $n=9$. Thus, we have concluded that Conjecture~\ref{conjecture} holds for $n=8$ and $n=9$. This investigation has been presented by the first author of this paper in \cite{Altinisik2015}.

After obtaining enough numerical evidence that the IHM conjecture can be true, we get the motivation to find out a proof of it. 

\begin{thm}[The Main Theorem] \label{conjproof}
	Assume the setup above. Then the IHM conjecture is true.
\end{thm}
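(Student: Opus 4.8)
The plan is to pass to inverses. Every $Z=YY^{T}$ with $Y\in K_n$ is positive definite with $\det Z=1$, so its smallest eigenvalue satisfies $\mu_n^{(1)}(Z)=1/\rho(Z^{-1})$, where $\rho$ denotes the spectral radius, and $Z^{-1}=(Y^{-1})^{T}Y^{-1}$ gives $\rho(Z^{-1})=\lVert Y^{-1}\rVert_2^{2}$ with $\lVert\cdot\rVert_2$ the spectral norm. Minimizing the smallest eigenvalue over $L_n$ is therefore the same as maximizing $\lVert Y^{-1}\rVert_2$ over $Y\in K_n$, and the IHM conjecture is equivalent to
\begin{equation*}
\lVert Y^{-1}\rVert_2\le\lVert Y_0^{-1}\rVert_2\qquad\text{for every }Y\in K_n .
\end{equation*}
I would establish this reduction first, since $Y_0\in K_n$ then forces the minimum to be attained at $Y_0$.

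Next I would bound the left side by a nonnegative matrix. From $\rho(M)\le\rho(\lvert M\rvert)$ together with the monotonicity of the spectral radius on entrywise-ordered nonnegative matrices (the inequality advertised in the abstract),
\begin{equation*}
\lVert Y^{-1}\rVert_2^{2}=\rho\bigl(Y^{-1}(Y^{-1})^{T}\bigr)\le\rho\bigl(\lvert Y^{-1}\rvert\,\lvert Y^{-1}\rvert^{T}\bigr).
\end{equation*}
The heart of the argument is the entrywise comparison $\lvert Y^{-1}\rvert\le\lvert Y_0^{-1}\rvert$, which by monotonicity again yields $\rho(\lvert Y^{-1}\rvert\lvert Y^{-1}\rvert^{T})\le\rho(\lvert Y_0^{-1}\rvert\lvert Y_0^{-1}\rvert^{T})=\lVert\,\lvert Y_0^{-1}\rvert\,\rVert_2^{2}$. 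Finally I would remove the absolute values on $Y_0^{-1}$: its entries satisfy $(Y_0^{-1})_{ij}=(-1)^{i-j}\lvert (Y_0^{-1})_{ij}\rvert$, a checkerboard sign pattern, so the signature matrix $D=\operatorname{diag}((-1)^{1},\ldots,(-1)^{n})$ gives $D\,Y_0^{-1}D=\lvert Y_0^{-1}\rvert$; as $D$ is orthogonal this shows $\lVert\,\lvert Y_0^{-1}\rvert\,\rVert_2=\lVert Y_0^{-1}\rVert_2$, closing the chain.

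The main obstacle is the entrywise bound $\lvert (Y^{-1})_{ij}\rvert\le\lvert (Y_0^{-1})_{ij}\rvert$. Writing $A=Y^{-1}$, the relation $YA=I$ gives $A_{jj}=1$ and $A_{ij}=-\sum_{k=j}^{i-1}Y_{ik}A_{kj}$ for $i>j$. For $Y_0$ one checks that $(Y_0)_{ik}=1$ below the diagonal exactly when $i-k$ is odd, so every descending path contributing to $(Y_0^{-1})_{ij}$ in the Neumann series $A=\sum_{k\ge0}(-1)^{k}(Y-I)^{k}$ has length congruent to $i-j$ modulo $2$; no cancellation occurs, and $\lvert (Y_0^{-1})_{ij}\rvert=F_{i-j}$, the number of compositions of $i-j$ into odd parts, which is a Fibonacci number. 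Proving $\lvert A_{ij}\rvert\le F_{i-j}$ for an arbitrary $Y$ is delicate precisely because the cancellation must be exploited: the naive triangle inequality fails, since $\sum_{m<M}F_m>F_M$.

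To handle this I would fix the column $j$ and set $u_m=A_{j+m,\,j}$, so that $u_0=1$ and $u_m=-\sum_{l=0}^{m-1}y_{m,l}u_l$ with each $y_{m,l}=Y_{j+m,\,j+l}\in\{0,1\}$ free, and I would track the two nonnegative partial sums $\alpha_m=\sum_{l\le m,\,u_l>0}u_l$ and $\beta_m=\sum_{l\le m,\,u_l<0}(-u_l)$. Since any admissible $u_{m+1}$ is a signed subset sum of $u_0,\ldots,u_m$, one has $\lvert u_{m+1}\rvert\le\max(\alpha_m,\beta_m)$, and the new term increases exactly one of $\alpha_m,\beta_m$ by at most the value of the other. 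An induction then shows that the sorted pair of $(\alpha_m,\beta_m)$ is dominated by the consecutive Fibonacci numbers $(F_{m+1},F_m)$, whence $\lvert u_m\rvert\le F_m$ and the entrywise bound follows. Combining everything, $\lVert Y^{-1}\rVert_2\le\lVert Y_0^{-1}\rVert_2$ for all $Y\in K_n$ with equality for $Y_0$, so $c_n=\mu_n^{(1)}(Y_0Y_0^{T})$, which is exactly the IHM conjecture.
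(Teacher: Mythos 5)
Your proposal is correct, and it rests on the same two pillars as the paper's proof: the entrywise Fibonacci bound $\lvert (Y^{-1})_{ij}\rvert\le F_{i-j}$ with equality for $Y_0$, and the monotonicity of the spectral radius under entrywise domination of nonnegative matrices (Horn--Johnson, Theorem~8.1.18). You execute the two delicate steps differently, though. For the sign-pattern issue, the paper proves $\rho(\lvert Z_0^{-1}\rvert)=\rho(Z_0^{-1})$ by showing the two matrices have the same characteristic polynomial, via traces of powers and Newton's identities applied to the checkerboard pattern $\operatorname{sgn}(Z_0^{-1})_{ij}=(-1)^{i-j}$; your diagonal signature similarity $DY_0^{-1}D=\lvert Y_0^{-1}\rvert$ with $D=\operatorname{diag}((-1)^1,\ldots,(-1)^n)$ reaches the same conclusion in one line and is cleaner. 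For the Fibonacci bound, the paper inducts on $i-j$ with a case analysis on the last subdiagonal entries of $N=Y-I$, repeatedly arguing that certain partial sums range over the same sets of values as lower-order entries; your two-potential induction --- tracking the positive and negative parts $\alpha_m,\beta_m$ of a column of $Y^{-1}$, noting that each new entry is minus a subset sum so that $u_{m+1}\le\beta_m$ when positive and $-u_{m+1}\le\alpha_m$ when negative, and showing the sorted pair stays dominated by $(F_{m+1},F_m)$ --- is a genuinely different argument that isolates exactly why the naive triangle inequality fails and what replaces it; I checked the induction and it closes. Your identification of $\lvert(Y_0^{-1})_{ij}\rvert=F_{i-j}$ by counting compositions of $i-j$ into odd parts (no cancellation since every contributing power of $N_0$ has the parity of $i-j$) also replaces the paper's computation via Fibonacci summation identities. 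The reduction to spectral norms of $Y^{-1}$ rather than working with $Z^{-1}$ directly is cosmetic. In short: same strategy, but two of the key lemmas are proved by different and arguably tidier means.
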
	

The strategy of the proof as follows. We prove that for any matrix $Y$ in $K_n$, $|Y^{-1}|\leq |Y_0^{-1}|$, where the matrix $Y_0$ given by (\ref{Y0Matrix}). Secondly, we show that $|Z^{-1}|\leq |(Y_0Y_0^T)^{-1}|$ for all $Z\in L_n$. Then, by using an inequality for spectral radii of nonnegative matrices, we obtain a proof of the IHM conjecture. We conclude the paper with a discussion about further studies on the constant $c_n$ and a conjecture on the uniqueness of the matrix $Y_0$.

\section{Proof of the Main Theorem}

First we present a simple fact about a particular nilpotent (0,1)-matrix which we use in the course of our proofs. Here we give the proof of this fact  
though one can find in the literature.
\begin{lem} \label{NLemma}
	Let $Y\in K_n$ and $N:=Y-I$, where $I$ is the $n\times n$ identity matrix. We denote by $(N^k)_{ij}$ the $ij-$entry of the positive integer $k$-th power of $N$. Then we have the following
	\begin{description}
		\item [i)] $(N^k)_{ij}=0$ whenever $i-j<k$,
		\item [ii)] $Y^{-1}=I-N+N^2-\cdots +(-1)^{n-1}N^{n-1}$.
	\end{description}
	
\end{lem}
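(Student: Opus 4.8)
The plan is to exploit the nilpotent structure of $N=Y-I$ directly. Since $Y\in K_n$ is lower triangular with unit diagonal, $N$ is strictly lower triangular, so its nonzero entries satisfy $i>j$, i.e.\ $(N)_{ij}=0$ whenever $i-j<1$. This is the base case $k=1$ of part (i).

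\emph{Part (i).} I would argue by induction on $k$. Assume $(N^{k})_{ij}=0$ whenever $i-j<k$. Writing
\begin{equation*}
(N^{k+1})_{ij}=\sum_{\ell=1}^{n}(N^{k})_{i\ell}\,(N)_{\ell j},
\end{equation*}
a summand can be nonzero only when both factors are nonzero, which by the inductive hypothesis and the base case forces $i-\ell\geq k$ and $\ell-j\geq 1$. Adding these gives $i-j\geq k+1$, so every term vanishes when $i-j<k+1$. This closes the induction. In particular, taking $k=n$ we get $(N^{n})_{ij}=0$ for all $i,j$ (since $i-j\leq n-1<n$ for an $n\times n$ matrix), so $N^{n}=0$ and $N$ is nilpotent of index at most $n$.

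\emph{Part (ii).} Once $N^{n}=0$ is established, the inverse follows from the standard finite geometric (Neumann) expansion. I would verify directly that
\begin{equation*}
(I+N)\bigl(I-N+N^{2}-\cdots+(-1)^{n-1}N^{n-1}\bigr)=I+(-1)^{n-1}N^{n}=I,
\end{equation*}
the middle terms telescoping in pairs, and the last term vanishing by part (i). Since $Y=I+N$, this exhibits the displayed alternating sum as $Y^{-1}$.

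I do not expect any genuine obstacle here: the only point requiring care is the bookkeeping in the inductive step of part (i), namely combining the two index inequalities correctly, and noting that the $n\times n$ size is exactly what makes $N^{n}=0$ so that the finite expansion in part (ii) terminates at the right power. The result is classical, and the proof is included only to keep the paper self-contained for use in the subsequent arguments.
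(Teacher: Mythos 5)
Your proof is correct and follows essentially the same route as the paper: part (i) is the index-counting argument that the paper dismisses as ``follows from the matrix multiplication'' (you merely spell out the induction), and part (ii) is the same finite geometric-series identity $(I+N)\bigl(I-N+\cdots+(-1)^{n-1}N^{n-1}\bigr)=I-(-N)^n$ combined with $N^n=0$.
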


\begin{proof} Since it is clear that $N$ is a strictly lower triangular (0,1)-matrix, the proof of the first claim follows from the matrix multiplication.
	For the proof of the second claim, consider
	$$I^n-(-N)^n=(I+N)(I-N+N^2-\cdots +(-1)^{n-1}N^{n-1}).$$
	Since $N^n=0$ and $Y=I+N$, we have
	$$Y^{-1}=I-N+N^2-\cdots +(-1)^{n-1}N^{n-1}.$$
\end{proof}	

Now we investigate the inverse of any matrix $Y$ in $K_n$. In the following lemma we obtain a recurrence relation for the entries of the inverse of $Y$. 
\begin{lem} \label{recurrenceLemma}
	Let $Y\in K_n$ and $N:=Y-I$. Also, let $N=(n_{ij})$ and $Y^{-1}=(a_{ij})$. Then we have the following recurrence relation for $a_{kl}:$
	$$
	a_{kl}=\left\{
	\begin{array}{cl}
	0 & \text{ if }  \ k<l,\\
	1 & \text{ if }  \ k=l,\\
	-\sum_{i=l}^{k-1}n_{ki}a_{il} & \text{ if }  \ k>l.
	\end{array}
	\right.
	$$
\end{lem}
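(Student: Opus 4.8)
The plan is to read off the recurrence directly from the identity $YY^{-1}=I$, exploiting the triangular support of both $N$ and $Y^{-1}$. Writing $Y=I+N$ and $A:=Y^{-1}=(a_{ij})$, the equation $(I+N)A=I$ rearranges to $A=I-NA$, which in entries reads $a_{kl}=\delta_{kl}-\sum_{i}n_{ki}a_{il}$. Everything then reduces to pinning down which terms of this sum survive and verifying the three stated cases.

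First I would establish that $A$ is lower triangular with unit diagonal, i.e.\ $a_{kl}=0$ for $k<l$ and $a_{kk}=1$. This is immediate from Lemma~\ref{NLemma}(ii): since $Y^{-1}=I-N+N^2-\cdots+(-1)^{n-1}N^{n-1}$ and each power $N^m$ with $m\geq 1$ is strictly lower triangular by Lemma~\ref{NLemma}(i) (indeed $(N^m)_{ij}=0$ whenever $i-j<m$, so in particular whenever $i\leq j$), only the $I$ term contributes on and above the diagonal. This yields the first two cases of the recurrence and, crucially, records the two support facts $n_{ki}=0$ for $i\geq k$ and $a_{il}=0$ for $i<l$.

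Next, for the case $k>l$ I would substitute $\delta_{kl}=0$ into $a_{kl}=-\sum_{i}n_{ki}a_{il}$ and trim the summation range. A term $n_{ki}a_{il}$ is nonzero only if simultaneously $i\leq k-1$ (because $N$ is strictly lower triangular) and $i\geq l$ (because $A$ is lower triangular); hence the index runs exactly over $l\leq i\leq k-1$, giving $a_{kl}=-\sum_{i=l}^{k-1}n_{ki}a_{il}$ as claimed.

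The argument is short, and the only real care needed is bookkeeping: one must confirm that the two support conditions intersect precisely in the interval $[l,k-1]$, so that no stray boundary term is dropped or added (for instance the $i=l$ term, where $a_{ll}=1$, must be retained). I expect no genuine obstacle here. The lemma is in essence forward substitution for the lower-triangular system $YA=I$, and its real purpose is to organize the index ranges so that the recurrence can later be iterated to compare the entries of $Y^{-1}$ with those of $Y_0^{-1}$.
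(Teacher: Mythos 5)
Your proposal is correct and follows essentially the same route as the paper: both rest on the identity $Y^{-1}=I-NY^{-1}$ (the paper derives it by multiplying the Neumann series of Lemma~\ref{NLemma}(ii) by $-N$, you read it directly off $(I+N)A=I$, which is if anything cleaner) and then trim the summation to $l\leq i\leq k-1$ using the strict lower triangularity of $N$ and the lower triangularity of $Y^{-1}$. No gaps.
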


\begin{proof}
	When we multiply both sides of the equality in Lemma~\ref{NLemma}.(ii) from the left by $-N$, we have
	$$
	-NY^{-1}=-N+N^2-\cdots +(-1)^{n-1}N^{n-1}+(-1)^nN^n.
	$$
	Since $N^n=0$, one can easily obtain
	\begin{equation}\label{inverseY}
	I-NY^{-1}=Y^{-1}. 
	\end{equation}
	By Lemma~\ref{NLemma}, it is clear that $a_{kl}=0$ if $k<l$ and $a_{kl}=1$ if $k=l$. Now, from (\ref{inverseY}), we have
	$$
	a_{kl}=-\sum_{i=1}^{n}n_{ki}a_{il}
	$$
	and hence, by Lemma~\ref{NLemma}, we obtain
	$$
	a_{kl}=-\sum_{i=l}^{k-1}n_{ki}a_{il}
	$$
	for all $k>l$.
\end{proof}
In the following theorem, we find the largest absolute value of each $a_{ij}$ in terms of Fibonacci numbers which is a surprising result.
  
\begin{thm} \label{maxTheorem}
	Let $Y\in K_n$ and $Y^{-1}=(a_{ij})$. Then, for $1\leq j < i\leq n$, we have $\left|a_{ij} \right|\leq F_{i-j} $, where $F_{i-j}$ is the $(i-j)$-th Fibonacci number. 
\end{thm}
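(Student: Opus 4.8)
The plan is to fix the column index $j$ and collapse the problem onto a single column of $Y^{-1}$. Writing $b_m := a_{mj}$ for $j \le m \le i$, Lemma~\ref{recurrenceLemma} gives $b_j = 1$ and
\[
b_m = -\sum_{r=j}^{m-1} n_{mr}\, b_r \qquad (j < m \le i),
\]
where each $n_{mr}\in\{0,1\}$. Thus every $b_m$ with $m>j$ is, up to an overall sign, a sum over the subset $A_m=\{r:n_{mr}=1\}$ of the earlier entries. The crude estimate $|b_m|\le\sum_{r<m}|b_r|$ combined with the identity $\sum_{s=1}^{p}F_s=F_{p+2}-1$ would only yield $|a_{ij}|\le F_{i-j+1}$, which is one Fibonacci index too large; so the whole point is to exploit the cancellation coming from the signs of the $b_r$.

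To capture this cancellation I would track the positive and negative masses separately. Set $P_m=\sum_{j\le r\le m,\, b_r>0} b_r$ and $Q_m=-\sum_{j\le r\le m,\, b_r<0} b_r$, so that $(P_j,Q_j)=(1,0)$. Since $b_{m+1}=-\sum_{r\in A_{m+1}} b_r$ is a signed subset sum of $\{b_j,\dots,b_m\}$, its value lies in $[-P_m, Q_m]$, whence $|b_{m+1}|\le\max(P_m,Q_m)$. Moreover, inspecting the sign of $b_{m+1}$ shows that passing from step $m$ to step $m+1$ increases exactly one of $P,Q$ by an amount bounded by the other coordinate and leaves the second unchanged: either $P_{m+1}\le P_m+Q_m$ with $Q_{m+1}=Q_m$, or $Q_{m+1}\le P_m+Q_m$ with $P_{m+1}=P_m$ (and if $b_{m+1}=0$ neither grows). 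I would stress that these inequalities hold for the actual sequence attached to the given $Y$, not merely for an optimizing one.

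These are exactly the moves that breed Fibonacci numbers, so I would prove by induction on $t\ge 0$, where $m=j+t$, the two-part invariant
\[
\max(P_m,Q_m)\le F_{t+1} \qquad\text{and}\qquad P_m+Q_m\le F_{t+2}.
\]
The base case $t=0$ reads $\max(1,0)=1\le F_1$ and $1\le F_2$. In the inductive step, say $Q$ is the coordinate that grows; then $Q_{m+1}\le P_m+Q_m\le F_{t+2}$ while $P_{m+1}=P_m\le F_{t+1}$, giving $\max(P_{m+1},Q_{m+1})\le F_{t+2}$, and $P_{m+1}+Q_{m+1}\le (P_m+Q_m)+P_m\le F_{t+2}+F_{t+1}=F_{t+3}$, using $P_m\le\max(P_m,Q_m)\le F_{t+1}$; the case where $P$ grows is symmetric, and the stationary case is immediate from monotonicity of $F$. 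Applying the invariant at $m=i-1$ (so $t=i-1-j$) together with $|b_i|\le\max(P_{i-1},Q_{i-1})$ then yields $|a_{ij}|=|b_i|\le F_{i-j}$, as claimed.

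I expect the main obstacle to be precisely the place where the triangle inequality leaks a Fibonacci index, namely the need to argue that the positive and negative contributions to $b_m$ cannot both attain their maxima at once. Splitting the mass into the pair $(P,Q)$ and observing that the two halves of the invariant feed each other in the inductive step — the bound on the sum controlling the growing coordinate, and the bound on the maximum controlling the increment — is what recovers the sharp Fibonacci recurrence and closes the gap.
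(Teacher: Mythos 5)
Your proof is correct, but it takes a genuinely different route from the paper's. The paper proves the bound by induction on $t=i-j$ with a case analysis on the last one or two subdiagonal entries $n_{j+k,j+k-1}$ and $n_{j+k-1,j+k-2}$, repeatedly invoking the claim that certain partial sums such as $\sum_{i=j}^{j+k-2}n_{j+k,i}a_{ij}$ ``have the same values'' as $a_{j+k-1,j}$ because the relevant $(0,1)$-parameters are free and independent; this is really an extremal argument about the maximum of $|a_{ij}|$ over the whole family $K_n$, and its rigor rests on that somewhat informal identification of ranges of values. Your argument instead fixes a single matrix $Y$ and runs a potential-function induction on the pair $(P_m,Q_m)$ of positive and negative masses in column $j$, with the two-part invariant $\max(P_m,Q_m)\le F_{t+1}$, $P_m+Q_m\le F_{t+2}$; the key observations --- that $b_{m+1}$ is a negated subset sum so $|b_{m+1}|\le\max(P_m,Q_m)$, and that only one of $P,Q$ grows at each step by at most the other coordinate --- are exactly what make the Fibonacci recurrence close, and they are verified pointwise for the given $Y$ rather than by comparing ranges over the family. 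Your version is therefore cleaner and easier to certify; what the paper's version buys is brevity and a direct view of which sign patterns of $N$ are extremal (which is what motivates the specific matrix $Y_0$). One small quibble: your motivational aside that the crude triangle-inequality estimate ``would only yield $|a_{ij}|\le F_{i-j+1}$'' is not quite right --- the naive induction $|b_m|\le\sum_{r<m}|b_r|$ does not close at any fixed Fibonacci shift and in fact only gives the bound $2^{i-j-1}$ --- but this does not affect the actual argument, which is sound.
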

\begin{proof} Let $N=(n_{ij})$ be as in Lemma~\ref{recurrenceLemma}. Also, let $1\leq j <i\leq n$ and $t=i-j$. We will prove that $\left|a_{j+t,j}  \right|\leq F_{t} $ by induction on $t$. 
Let $t=1$. By Lemma~\ref{recurrenceLemma}, we have $\left|a_{j+1,j}  \right| = n_{j+1,j}$, where $n_{j+1,j}$ can be $0$ or $1$. Thus, $\left|a_{j+1,j}  \right| \leq 1 = F_{1}$. Now assume that $\left|a_{j+t,j}  \right|\leq F_{t} $ for each $t=1,2,\ldots , k-1$. By Lemma~\ref{recurrenceLemma}, we have $a_{j+k,j}=-\sum_{i=j}^{j+k-1}n_{j+k,i}a_{ij}$. 

Case~1. Assume $n_{j+k,j+k-1}=0$. Then $\left|a_{j+k,j}  \right|=\left| \sum_{i=j}^{j+k-2}n_{j+k,i}a_{ij}\right| $. Also, by Lemma~\ref{recurrenceLemma}, $a_{j+k-1,j}=-\sum_{i=j}^{j+k-2}n_{j+k-1,i}a_{ij}$. Since both of $n_{j+k,i} $ and $n_{j+k-1,i}$ for each $i=j,j+1,\ldots,j+k-2$ can arbitrarily be $0$ or $1$, it is clear that $a_{j+k,j}$ and $a_{j+k-1,j}$ have the same values. Therefore, by the induction hypothesis, we obtain $\left|a_{j+k,j}  \right| \leq F_{k-1} \leq F_k$. 

Case~2. Assume $n_{j+k,j+k-1}=1$.

Subcase~i. Assume $n_{j+k-1,j+k-2}=0$. By Lemma~\ref{recurrenceLemma}, we have
$$
\left| a_{j+k,j} \right| \leq \left| \sum_{i=j}^{j+k-2}n_{j+k,i}a_{ij} \right| + \left| a_{j+k-1,j} \right|. 
$$
Also, by Lemma~\ref{recurrenceLemma}, it is clear that $a_{j+k-1,j}=-\sum_{i=j}^{j+k-2}n_{j+k-1,i}a_{ij}$. Since both of $n_{j+k,i} $ and $n_{j+k-1,i}$ for each $i=j,j+1,\ldots,j+k-2$ can arbitrarily be $0$ or $1$, it is clear that $\sum_{i=j}^{j+k-2}n_{j+k,i}a_{ij}$ and $a_{j+k-1,j}$ have the same values. Thus, by the induction hypothesis, we obtain $\left| \sum_{i=j}^{j+k-2}n_{j+k,i}a_{ij} \right| \leq F_{k-1}$. Beside this, by our assumption in Subcase~i, $\left| a_{j+k-1,j} \right| = \left| \sum_{i=j}^{j+k-3}n_{j+k-1,i}a_{ij} \right| $. Since both of $n_{j+k-1,i} $ and $n_{j+k-2,i}$ for each $i=j,j+1,\ldots,j+k-3$ can arbitrarily be $0$ or $1$, it is obvious that $\sum_{i=j}^{j+k-3}n_{j+k-1,i}a_{ij}$ and $a_{j+k-2,j}$ have the same values. By the induction hypothesis, we obtain $\left| a_{j+k-1,j}\right|  \leq F_{k-2}.$ Thus, $\left| a_{j+k,j} \right| \leq F_{k-1} + F_{k-2}=F_{k}.$

Subcase~ii. Assume $n_{j+k-1,j+k-2}=1$. By Lemma~\ref{recurrenceLemma}, we have
$$
\left| a_{j+k,j} \right| \leq \left| \sum_{i=j}^{j+k-3}n_{j+k,i}a_{ij} \right| + \left| \sum_{i=j+k-2}^{j+k-1}n_{j+k,i}a_{ij} \right|. 
$$
Since  $\sum_{i=j}^{j+k-3}n_{j+k,i}a_{ij}$ and $a_{j+k-2,j}$ have the same values, by the induction hypothesis, we have $\left| \sum_{i=j}^{j+k-3}n_{j+k,i}a_{ij}\right|  \leq F_{k-2}$. In addition to this, since $n_{j+k-1,j+k-2}=1$ we obtain 
$$
\sum_{i=j+k-2}^{j+k-1}n_{j+k,i}a_{ij}= (n_{j+k,j+k-2}-1)a_{j+k-2,j}-\sum_{i=j}^{j+k-3}n_{j+k-1,i}a_{ij}.
$$
Here each $(1-n_{j+k,j+k-2}), n_{j+k-1,j}, \ldots, n_{j+k-1,j+k-3}$ can arbitrarily be $0$ or $1$. Thus, $\sum_{i=j+k-2}^{j+k-1}n_{j+k,i}a_{ij}$ and $a_{j+k-1,j}$ have the same values and hence, by the induction hypothesis,  $\left| \sum_{i=j+k-2}^{j+k-1}n_{j+k,i}a_{ij}\right| \leq F_{k-1} $. Therefore, we obtain $\left|  a_{j+k,j}\right|  \leq F_{k-2} + F_{k-1} = F_k$.

The principle of induction completes the proof. 
\end{proof}

Let $A=(a_{ij})$, $B=(b_{ij})\in M_n(\mathbb{R}),$ that is, the set of all $n \times n$ real matrices. We write $A\geq 0$ (or $> 0$) if all $a_{ij}\geq 0$ (or $> 0$).
Also, we write $A\geq B$ (or $>0$) if $A-B\geq 0$ (or $>0$).
In addition to this, we define $\left| A\right|=(\left| a_{ij}\right| )$, that is, $\left| A\right| $  is the element-wise absolute value of $A$. The largest eigenvalue of $A$ in modulus is denoted by $\rho (A)$ and called the spectral radius of $A$. Now we fix the notation for the rest of the paper.
\begin{thm} \label{Last}
	Let $Y_0=(b_{ij})$ be as in (\ref{Y0Matrix}) and let $Z_0:=Y_0Y_0^T$. For all $Z\in L_n$, we have $|Z^{-1}|\leq |Z_0^{-1}|$.
\end{thm}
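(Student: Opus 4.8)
The plan is to reduce the claimed entrywise inequality between $Z^{-1}$ and $Z_0^{-1}$ to the entrywise inequality $|Y^{-1}|\le|Y_0^{-1}|$ foreshadowed by Theorem~\ref{maxTheorem}. Writing an arbitrary $Z\in L_n$ as $Z=YY^T$ with $Y\in K_n$, I would first record that $Z^{-1}=(Y^{-1})^T Y^{-1}$, and in particular $Z_0^{-1}=(Y_0^{-1})^T Y_0^{-1}$. Hence, with $Y^{-1}=(a_{ij})$, the entries of $Z^{-1}$ are the Gram-type sums $(Z^{-1})_{ij}=\sum_{k}a_{ki}a_{kj}$, and likewise for $Z_0^{-1}$ in terms of the entries of $Y_0^{-1}$. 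The theorem thus becomes a comparison of two such bilinear sums, and the work splits into (a) controlling the individual factors $a_{ki}$ and (b) controlling the signs so that no cancellation helps the competitor $Z_0$.

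For step (a) I would pin down $Y_0^{-1}$ exactly. Since $N_0:=Y_0-I$ is the strictly lower triangular Toeplitz $(0,1)$-matrix whose $(i,j)$ entry equals $1$ precisely when $i-j$ is odd, its inverse is again lower triangular and Toeplitz, so $(Y_0^{-1})_{ij}$ depends only on $t=i-j$. Feeding this into the recurrence of Lemma~\ref{recurrenceLemma} collapses it to a one-parameter recursion for $c_t:=(Y_0^{-1})_{j+t,j}$; solving it by induction on $t$ (separating the parity of $t$) gives the closed form $c_t=(-1)^tF_t$, i.e. $(Y_0^{-1})_{ij}=(-1)^{i-j}F_{i-j}$ for $i>j$, with $c_0=1$ on the diagonal. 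Comparing with the bound $|a_{ij}|\le F_{i-j}$ of Theorem~\ref{maxTheorem} (and noting both matrices carry $1$'s on the diagonal and $0$'s above it) yields $|Y^{-1}|\le|Y_0^{-1}|$ entrywise, so each factor satisfies $|a_{ki}|\le|(Y_0^{-1})_{ki}|=F_{k-i}$.

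Step (b) is where the argument must be sharp. The triangle inequality gives $|(Z^{-1})_{ij}|\le\sum_k|a_{ki}||a_{kj}|\le\sum_k|(Y_0^{-1})_{ki}||(Y_0^{-1})_{kj}|$, but to finish I need the right-hand side to \emph{equal} $|(Z_0^{-1})_{ij}|$ rather than merely bound it. The key observation is that, thanks to the explicit sign $(-1)^{i-j}$, every nonzero product in $(Z_0^{-1})_{ij}=\sum_k(Y_0^{-1})_{ki}(Y_0^{-1})_{kj}$ carries the same sign $(-1)^{i+j}$, independent of the summation index $k$ (the two factors contribute $(-1)^{k-i}(-1)^{k-j}=(-1)^{i+j}$). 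Consequently the sum for $Z_0$ exhibits no internal cancellation, so $|(Z_0^{-1})_{ij}|=\sum_k|(Y_0^{-1})_{ki}||(Y_0^{-1})_{kj}|$, and the chain of inequalities closes to $|(Z^{-1})_{ij}|\le|(Z_0^{-1})_{ij}|$ for every $i,j$.

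The main obstacle is precisely this sign-coherence step: the whole theorem hinges on $Y_0^{-1}$ being the extremal competitor \emph{with a perfectly alternating sign pattern}, so that $Z_0^{-1}$ realizes, with equality, the entrywise maximum of $\sum_k|a_{ki}||a_{kj}|$ over all $Y\in K_n$. I expect the two routine-but-fiddly points to be the exact solution of the Toeplitz recurrence for $c_t$ (extracting both the Fibonacci magnitude and the alternating sign, including the mismatch between $c_0=1$ and $F_0=0$ on the diagonal, which must be treated as a separate term in the Gram sums) and the bookkeeping of the effective summation range $k\ge\max(i,j)$; neither is conceptually hard once the sign pattern of $Y_0^{-1}$ is in hand.
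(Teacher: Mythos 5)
Your proposal is correct and follows essentially the same route as the paper: write $Z^{-1}=(Y^{-1})^TY^{-1}$, compute $Y_0^{-1}$ explicitly as $(-1)^{i-j}F_{i-j}$ via the recurrence of Lemma~\ref{recurrenceLemma}, bound $|a_{ij}|\leq F_{i-j}$ by Theorem~\ref{maxTheorem}, and use the coherent sign pattern of $Y_0^{-1}$ to see that the Gram sums for $Z_0^{-1}$ have no cancellation, so the triangle inequality closes the argument. The sign-coherence point you single out as the crux is exactly the mechanism the paper relies on.
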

\begin{proof}
	First we obtain the inverse of $Y_0$. Let $N_0=Y_0-I$ and $N_0=(m_{ij})$. Then
	$$
	m_{ij}=\left\{
	\begin{array}{cl}
	0 & \text{ if }  \ i\leq j,\\
	\frac{1-(-1)^{i+j}}{2} & \text{ otherwise. }
	\end{array}
	\right.
	$$
	We claim that the inverse of $Y_0$ is the $n\times n$ matrix $(c_{ij})$, where
	$$
	c_{ij}=\left\{
	\begin{array}{cl}
	0 & \text{ if }  \ i<j,\\
	1 & \text{ if }  \ i=j,\\
	(-1)^{i-j}F_{i-j} & \text{ if }  \ i>j.
	\end{array}
	\right.
	$$
	Since $Y_0\in K_n$ by Lemma~\ref{recurrenceLemma},
	it is clear that $c_{ij}=0$ if $i<j$ and $c_{ij}=1$ if $i=j$.
	Also, by Lemma~\ref{recurrenceLemma}, we have
	$$
	c_{ij}=-\sum_{k=j}^{i-1}m_{ik}c_{kj}
	$$
	for $i>j$.
	Now we prove that $c_{ij}=(-1)^{i-j}F_{i-j}$ whenever $i>j$ by induction on $t=i-j$.
	For $t=1$,  $$c_{j+1,j}=-m_{j+1,j}=-1=-F_1.$$
	Assume that $c_{j+t,j}=(-1)^tF_t$ for all $t=1,2,\ldots,k-1$.
	Recall that
	$$
	c_{j+k,j}=-\sum_{s=j}^{j+k-1}m_{j+k,s}c_{sj}.
	$$
	By the induction hypothesis, if $k$ is even then we have
	$$
	c_{j+k,j}=\sum_{s=1}^{k/2}F_{2s-1} =F_k
	$$
	and if $k$ is odd then
	$$
	c_{j+k,j}=-1-\sum_{s=1}^{(k-1)/2}F_{2s} =-F_k. 
	$$
	Thus, $c_{j+k,j}=(-1)^kF_k$. 
	
	Secondly, we calculate the inverse of $Z_0$.
	Since $Z_0=Y_0Y_0^T$ and $Y_0^{-1}=(c_{ij})$, we have
	$$
	(Z_0^{-1})_{ii}=\sum_{k=1}^{n}c_{ki}^2=1+\sum_{k=i+1}^{n}F_{k-i}^2
	$$
	for all $i=1,2,\ldots,n$.
	Now let $1\leq i<j\leq n$. Then
	\begin{eqnarray*}
		(Z_0^{-1})_{ij}&=&\sum_{t=1}^{n}c_{ti}c_{tj} \\
		&=&c_{ji}+\sum_{t=j+1}^{n}c_{ti}c_{tj} \\
		&=&(-1)^{j-i}F_{j-i}+\sum_{t=j+1}^{n}(-1)^{-i-j}F_{t-i}F_{t-j} \\
		&=&(-1)^{j-i}(F_{j-i}+\sum_{t=j+1}^{n}F_{t-i}F_{t-j}). \\
	\end{eqnarray*}
	Since $Z_0^{-1}$ is symmetric, for all $1\leq j<i\leq n$,
	$$
	(Z_0^{-1})_{ij}=(-1)^{i-j}(F_{i-j}+\sum_{t=i+1}^{n}F_{t-i}F_{t-j}).
	$$
	Now we prove the claim of the theorem. For each $Z\in L_n$, there exists a matrix $Y$ in $K_n$ such that $Z=YY^T$. Let $Y^{-1}=(a_{ij})$. Then, by Lemma~\ref{recurrenceLemma} and Theorem~\ref{maxTheorem}, we have 
	\begin{eqnarray*}
		\left| (Z^{-1})_{ii}\right|  &=&\left|\sum_{k=1}^{n}a_{ki}^2\right| \\
		&=& \sum_{k=1}^{n}|a_{ki}|^2\\
		&=& \sum_{k=i}^{n}|a_{ki}|^2 \\
		&\leq& 1+ \sum_{k=i+1}^{n}F_{k-i}^2 \\
		&=&\left| (Z_0^{-1})_{ii}\right|
	\end{eqnarray*}
	for all $i=1,2,\ldots,n$.
	
	Let $j>i$. By Lemma~\ref{recurrenceLemma} and Theorem~\ref{maxTheorem}, we have
	\begin{eqnarray*}
	\left| (Z^{-1})_{ij} \right| 	&=& \left|\sum_{t=1}^{n}a_{ti}a_{tj}\right| \\
		&\leq&\sum_{t=1}^{n}|a_{ti}||a_{tj}| \\
		&=& |a_{ji}|+\sum_{t=j+1}^{n}|a_{ti}||a_{tj}|\\
		&\leq& F_{j-i}+\sum_{t=j+1}^{n}F_{t-i}F_{t-j}\\
		&=&\left| (Z_0^{-1})_{ij}\right| 
	\end{eqnarray*}
	Finally, since $Z^{-1}$ and $Z_0^{-1}$ are symmetric $|Z^{-1}|\leq |Z_0^{-1}|$ for all $Z\in L_n$.
	
\end{proof}	
The following lemma is crucial in proof of the Main Theorem.
\begin{lem}[See \cite{HornJohnson}, Theorem 8.1.18] \label{HornJohnsonLemma}
	Let $A,B \in M_n\mathbb{(R)}$. If $\left| A\right| \leq B$, then $\rho (A)\leq \rho (\left|A\right|)\leq\rho (B)$. 
\end{lem}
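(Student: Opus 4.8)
The plan is to reduce the statement to \emph{Gelfand's formula} for the spectral radius, namely that $\rho(M)=\lim_{k\to\infty}\|M^k\|^{1/k}$ holds for any submultiplicative matrix norm $\|\cdot\|$, and to combine it with an entrywise monotonicity argument. First I would choose a norm that behaves well with respect to the entrywise order: the maximum absolute row-sum norm $\|M\|_\infty=\max_i\sum_j|m_{ij}|$ is submultiplicative (being an induced operator norm), \emph{absolute} in the sense that $\|M\|_\infty=\||M|\|_\infty$, and \emph{monotone} in the sense that $0\le C\le D$ entrywise forces $\|C\|_\infty\le\|D\|_\infty$. These three properties are exactly what the argument needs.

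The core combinatorial input is the entrywise chain $|A^k|\le|A|^k\le B^k$ for every $k\ge 1$. The first inequality I would prove by induction on $k$: writing out the $ij$-entry of $A^k=A^{k-1}A$ and applying the triangle inequality gives
$$\bigl|(A^k)_{ij}\bigr|=\Bigl|\sum_\ell (A^{k-1})_{i\ell}A_{\ell j}\Bigr|\le\sum_\ell \bigl|(A^{k-1})_{i\ell}\bigr|\,|A_{\ell j}|\le\sum_\ell (|A|^{k-1})_{i\ell}\,(|A|)_{\ell j}=(|A|^k)_{ij},$$
the last step using the inductive hypothesis $|A^{k-1}|\le|A|^{k-1}$. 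The second inequality follows from the hypothesis $|A|\le B$ together with the fact that multiplication of nonnegative matrices preserves the entrywise order, again by a one-line induction: $|A|^k=|A|^{k-1}|A|\le B^{k-1}B=B^k$.

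With the chain in hand I would apply the norm. Absoluteness and monotonicity give
$$\|A^k\|_\infty=\bigl\||A^k|\bigr\|_\infty\le\bigl\||A|^k\bigr\|_\infty\le\|B^k\|_\infty$$
for every $k$. Taking $k$-th roots and passing to the limit, Gelfand's formula applied to each of $A$, $|A|$ and $B$ yields
$$\rho(A)=\lim_{k\to\infty}\|A^k\|_\infty^{1/k}\le\lim_{k\to\infty}\bigl\||A|^k\bigr\|_\infty^{1/k}=\rho(|A|)\le\lim_{k\to\infty}\|B^k\|_\infty^{1/k}=\rho(B),$$
which is exactly the asserted two-sided inequality.

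I expect the only delicate point to be verifying that the chosen norm simultaneously enjoys all three properties (submultiplicativity, absoluteness, monotonicity) and that Gelfand's limit is legitimately chained along these inequalities; everything else is a routine induction. An alternative route for the second inequality $\rho(|A|)\le\rho(B)$ would be to invoke the monotonicity of the spectral radius on nonnegative matrices directly from Perron--Frobenius theory, but the Gelfand approach has the advantage of handling both inequalities in a single uniform step.
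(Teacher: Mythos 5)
Your proof is correct, but note that the paper does not prove this lemma at all: it is quoted verbatim from Horn and Johnson (Theorem 8.1.18), so there is no in-paper argument to compare against. Your Gelfand-formula argument via the entrywise chain $|A^k|\le |A|^k\le B^k$ and an absolute, monotone, submultiplicative norm is essentially the standard textbook proof given in that reference (Horn and Johnson use the Frobenius norm where you use the maximum row-sum norm, but either choice has the three required properties), and every step checks out.
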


Finally, 
we are ready to give proof of the Main Theorem.

\begin{proof}[\textbf{Proof of the Main Theorem}]
	Let $Z_0$ be as in Theorem~\ref{Last}. First we prove that the matrices $Z_0^{-1}$ and $|Z_0^{-1}|$ have the same characteristic polynomial. By the definition of the trace of a square matrix, it is clear that
	$$ 
	trace((Z_0^{-1})^k)=\sum_{i_1,\ldots , i_k=1}^{n}(Z_0^{-1})_{i_1i_2}\ldots (Z_0^{-1})_{i_{k-1}i_k}(Z_0^{-1})_{i_ki_1}
	$$
	for each $k=1,2,\ldots ,n$. Also, from the formulae for $(Z_0^{-1})_{ij}$ in the proof of Theorem~\ref{Last}, one can easily show that $sgn(Z_0^{-1})_{ij}=(-1)^{i-j}$ for all $1\leq i,j\leq n$. Thus, we have $$sgn((Z_0^{-1})_{i_1i_2}\ldots (Z_0^{-1})_{i_{k-1}i_k}(Z_0^{-1})_{i_ki_1})=1$$ and hence $trace(|Z_0^{-1}|^k) = trace((Z_0^{-1})^k)$. By Newton's identities \cite{Kalman}, we obtain that $Z_0^{-1}$ and $|Z_0^{-1}|$ have the same characteristic polynomial. Thus, $\rho (|Z_0^{-1}|)=\rho (Z_0^{-1})$. From Theorem~\ref{Last} and Lemma~\ref{HornJohnsonLemma}, now we obtain
$$
\rho (Z^{-1})\leq \rho (|Z^{-1}|)\leq \rho (|Z_0^{-1}|)=\rho (Z_0^{-1}).
$$
Since all $Z$ in $L_n$ are positive definite, the smallest eigenvalue of $Z_0$ is less than or equal to the smallest eigenvalue of $Z$ for all $Z$ in $L_n$. \end{proof}	

\section{Some Results and Open Problems on the Constant $c_n$}

In the literature there are not so many results on estimating the value of $c_n$. Recently, Mattila \cite{Mattila2015} has presented a lower bound for $c_n$. Indeed, he proved that $c_n$ is bounded below by $( \frac{6}{n^4+2n^3+2n^2+n} )^{\frac{n-1}{2}} $. Then he showed that this lower bound can be replaced with $ ( \frac{48}{n^4+56n^2+48n} )^{\frac{n-1}{2}} $ when $n$ is even, and $ ( \frac{48}{n^4+50n^2+48n-51} )^{\frac{n-1}{2}} $ when $n$ is odd. Recently, beside Mattila's results,  Alt\i n\i \c{s}\i k and B\"{u}y\"{u}kk\"{o}se \cite{AltinisikBuyukkose2015} have obtained a lower bound for the smallest eigenvalue $t_n$ of the $n\times n$ matrix $E_n^T E_n$, where the $ij-$ entry of $E_n$ is 1 if $j|i$ and $0$ otherwise, i.e., $t_n \geq \left(n\sum_{k=1}^{n} \mu^2(k) \right)^{-1}$. Indeed, this bound can be used instead of lower bounds including $c_n$ for the smallest eigenvalues of GCD and related matrices defined on $S=\{1,2,\ldots,n\}$ in the literature, see \cite{Hong2008,HongLoewy2004,IlmonenHaukkanenMerikoski,Mattila2015,MattilaHauk2014}. After above studies on estimating the value of $c_n$, we naturally raise the following problem.

\begin{prob}
Can one improve the lower bounds mentioned above for $c_n$? 
\end{prob}

On the other hand, in our investigation \cite{Altinisik2015}, we cannot find any matrix $Y$ other than $Y_0$ in $K_n$ such that $c_n$ is equal to the smallest eigenvalue of $YY^T$ for each $n=2,3,\ldots , 9$. After this observation we can present the following conjecture. 

\begin{conj}
	Let $n\geq 2$. There is a unique matrix $Y$ in $K_n$ such that $c_n$ is equal to the smallest eigenvalue of $YY^T$. In other words, if the smallest eigenvalue of $YY^T$ is equal to $c_n$ then $Y=Y_0$, where $Y_0$ is defined by (\ref{Y0Matrix}). 
\end{conj}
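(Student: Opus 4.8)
The plan is to show that if some $Y\in K_n$ satisfies $\mu_n^{(1)}(YY^T)=c_n$, then every inequality in the chain used to prove the Main Theorem collapses to an equality, and that this equality can be pushed backwards through the constructions of Theorem~\ref{Last} and Theorem~\ref{maxTheorem} until it forces $Y=Y_0$. Set $Z=YY^T$ and $Z_0=Y_0Y_0^T$. Since $Z$ and $Z_0$ are positive definite, $\mu_n^{(1)}(Z)=1/\rho(Z^{-1})$ and $c_n=\mu_n^{(1)}(Z_0)=1/\rho(Z_0^{-1})$, so the hypothesis $\mu_n^{(1)}(Z)=c_n$ is equivalent to $\rho(Z^{-1})=\rho(Z_0^{-1})$. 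Feeding this into the chain $\rho(Z^{-1})\le\rho(|Z^{-1}|)\le\rho(|Z_0^{-1}|)=\rho(Z_0^{-1})$ established in the proof of Theorem~\ref{conjproof} forces, in particular, $\rho(|Z^{-1}|)=\rho(|Z_0^{-1}|)$.

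The first genuine step is to upgrade Lemma~\ref{HornJohnsonLemma} to its strict form. From the explicit formula for $(Z_0^{-1})_{ij}$ in the proof of Theorem~\ref{Last} one sees directly that every entry of $|Z_0^{-1}|$ is strictly positive (the diagonal entries equal $1+\sum F_{k-i}^2\ge 1$, and each off-diagonal entry equals $F_{|i-j|}+\sum F_{t-i}F_{t-j}\ge 1$), so $|Z_0^{-1}|$ is irreducible. By the strict monotonicity of the Perron--Frobenius eigenvalue, for nonnegative $A\le B$ with $B$ irreducible one has $\rho(A)<\rho(B)$ whenever $A\ne B$. Applying this with $A=|Z^{-1}|\le|Z_0^{-1}|=B$ (Theorem~\ref{Last}) and $\rho(|Z^{-1}|)=\rho(|Z_0^{-1}|)$ yields the entrywise equality $|Z^{-1}|=|Z_0^{-1}|$. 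I would then descend from $Z$ to $Y$: writing $Y^{-1}=(a_{ij})$, the diagonal entries give $(Z^{-1})_{ii}=\sum_{k\ge i}|a_{ki}|^2=1+\sum_{k>i}F_{k-i}^2=(Z_0^{-1})_{ii}$, and since $|a_{ki}|\le F_{k-i}$ by Theorem~\ref{maxTheorem}, equality of these sums forces $|a_{ki}|=F_{k-i}$ for every $k>i$; that is, $|Y^{-1}|=|Y_0^{-1}|$.

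The final and most delicate step is to prove that $|Y^{-1}|=|Y_0^{-1}|$ forces $Y=Y_0$; equivalently, with $N=Y-I=(n_{ij})$ and $N_0=Y_0-I$, that $N=N_0$. I would argue by induction on the subdiagonal index $t=i-j$, showing that $|a_{ij}|=F_{t}$ determines $n_{ij}$ once all lower subdiagonals of $N$ are known to coincide with those of $N_0$. Under that inductive hypothesis the recurrence of Lemma~\ref{recurrenceLemma} reproduces the $Y_0$ computation, giving $a_{pq}=(-1)^{p-q}F_{p-q}$ for all $p-q<t$; hence for $i-j=t$ the recurrence reads $a_{ij}=-n_{ij}+C$ with $C$ equal to its $Y_0$-value, so $a_{ij}=(-1)^tF_t+\bigl((N_0)_{ij}-n_{ij}\bigr)$. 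Setting $\delta=(N_0)_{ij}-n_{ij}\in\{-1,0,1\}$, the requirement $|a_{ij}|=F_t$ becomes $|(-1)^tF_t+\delta|=F_t$, whose only solution with $|\delta|\le 1$ and $F_t\ge 1$ is $\delta=0$, i.e.\ $n_{ij}=(N_0)_{ij}$. The base case $t=1$ forces every first-subdiagonal entry to be $1$ and starts the induction; carrying it through all $t$ gives $N=N_0$, hence $Y=Y_0$.

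I expect the main obstacle to lie in this last step. One must verify carefully that, once the lower subdiagonals agree with $N_0$, the constant $C$ appearing in the recurrence is genuinely independent of the single undetermined entry $n_{ij}$ and matches its $Y_0$-value uniformly across every column $j$, so that the scalar equation $|(-1)^tF_t+\delta|=F_t$ really isolates $n_{ij}$. The sign bookkeeping here, namely that $\operatorname{sgn}(a_{pq})=(-1)^{p-q}$ is exactly what makes the Fibonacci bound of Theorem~\ref{maxTheorem} tight only for the alternating pattern defining $Y_0$, is where the argument must be handled with the most care.
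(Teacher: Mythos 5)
The paper contains no proof of this statement for you to be compared against: it is posed as an open conjecture, supported only by the authors' numerical verification for $n=2,3,\ldots,9$. So your proposal has to be judged on its own merits, and having traced it step by step I find no gap; it is a genuine equality-case (rigidity) analysis of the chain used to prove Theorem~\ref{conjproof}, and as far as I can verify it would settle the conjecture. The two nonstandard ingredients both check out. First, the upgrade of Lemma~\ref{HornJohnsonLemma}: since every entry of $|Z_0^{-1}|$ is at least $1$ (diagonal entries are $1+\sum_{k>i}F_{k-i}^2$ and off-diagonal moduli are $F_{|i-j|}+\sum_t F_{t-i}F_{t-j}\geq F_{|i-j|}\geq 1$), the matrix is positive, hence irreducible, and the strict Perron--Frobenius monotonicity you invoke (if $0\leq A\leq B$ with $B$ irreducible and $A\neq B$, then $\rho(A)<\rho(B)$; this is the Wielandt-type result, Theorem 8.4.5 in \cite{HornJohnson}, not the paper's Lemma~\ref{HornJohnsonLemma}, which is only the weak form) does give $|Z^{-1}|=|Z_0^{-1}|$. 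Note that you then need only the \emph{diagonal} of this equality: $\sum_{k>i}a_{ki}^2=\sum_{k>i}F_{k-i}^2$ together with the termwise bound $|a_{ki}|\leq F_{k-i}$ of Theorem~\ref{maxTheorem} pins $|a_{ki}|=F_{k-i}$ column by column, so $|Y^{-1}|=|Y_0^{-1}|$ without any appeal to the off-diagonal entries.

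The final induction, which you rightly flag as the delicate step, also survives scrutiny. By Lemma~\ref{recurrenceLemma}, an entry $a_{kj}$ with $k-j<t$ depends only on entries $n_{ks}$ of subdiagonal index $k-s\leq k-j<t$ (and recursively on lower ones), so under the inductive hypothesis $N$ and $N_0$ agree below index $t$ and $a_{kj}=(-1)^{k-j}F_{k-j}$ there; and in the recurrence for $a_{ij}$ with $i-j=t$ the unknown $n_{ij}$ enters through the single term $-n_{ij}a_{jj}=-n_{ij}$, since the remaining summands involve only $n_{ik}$ with $k>j$ (index $<t$) and the already-determined $a_{kj}$, none of which see row $i$. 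Hence $a_{ij}=(-1)^tF_t+\delta$ with $\delta=(N_0)_{ij}-n_{ij}\in\{-1,0,1\}$ exactly as you claim, and $|(-1)^tF_t+\delta|=F_t$ forces $\delta=0$ because the alternatives require $\delta=\mp 2F_t$ with $F_t\geq 1$; the base case $t=1$ forces the first subdiagonal to be all ones, matching $N_0$. Two caveats for the write-up: your argument inherits Theorem~\ref{conjproof} (so that $c_n=\mu_n^{(1)}(Z_0)$) and Theorem~\ref{maxTheorem}, whose proof in the paper is itself somewhat informally worded, so state these dependencies explicitly; and cite the strict monotonicity lemma precisely, since irreducibility of the dominating matrix is genuinely needed there and is exactly what the explicit Fibonacci formula for $Z_0^{-1}$ supplies.
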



\begin{thebibliography}{9}


\bibitem{Altinisik2009} E. Alt\i n\i \c{s}\i k, On inverses of GCD matrices associated with multiplicative functions and a proof of the Hong-Loewy conjecture, Linear Algebra Appl. 430 (2009) 1313-1327.

\bibitem{Altinisik2015} E. Alt\i n\i \c{s}\i k, On a Conjecture on the Smallest Eigenvalues of Some Special Positive Definite Matrices, 3rd International Conference on Applied Mathematics \& Approximation Theory - AMAT 2015, 28-31 May 2015, Ankara, Turkey, 2015.


\bibitem{Altinisik2005} E. Alt\i n\i \c{s}\i k, B. E. Sagan and N. Tu\~{g}lu, GCD matrices, posets, and
nonintersecting paths, Linear and Multilinear Algebra 53(2) (2005) 75-84.

\bibitem{AltinisikBuyukkose2014} E. Alt\i n\i \c{s}\i k and \c{S}. B\"{u}y\"{u}kk\"{o}se, A proof of a conjecture on monotonic behavior of the largest eigenvalue of a number-theoretic matrix, 12th International Conference of Numerical Analysis and Applied Mathematics, Rhodes, Greece, 2014.

\bibitem{AltinisikBuyukkose2015} E. Alt\i n\i \c{s}\i k and \c{S}. B\"{u}y\"{u}kk\"{o}se, A proof of a conjecture on monotonic behavior of the smallest and the largest eigenvalue of a number-theoretic matrix, Linear Algebra Appl. 471 (2015) 141-149.

\bibitem{Altinisik2004} E. Alt\i n\i \c{s}\i k, N. Tu\~{g}lu, P. Haukkanen, A note on bounds for norms of the reciprocal LCM matrix,
Math. Inequal. Appl. 7.4 (2004) 491-496.

\bibitem{BeslinLigh} S. Beslin and S. Ligh, Greatest common divisor matrices,
Linear Algebra Appl. 118 (1989) 69-76.

\bibitem{BourqueLigh1992} K. Bourque and S. Ligh, On GCD and LCM matrices, Linear Algebra Appl. 174 (1992) 65-74.

\bibitem{Hauk1997} P. Haukkanen, J. Wang and J. Sillanp\"{a}\"{a}, On Smith's determinant,
Linear Algebra Appl. 258 (1997) 251-269.

\bibitem{Hong2008} S. Hong, Asymptotic behavior of largest eigenvalue of matrices associated with completely even functions (mod $r$) Asian-Europ.J. Math. 1 (2008) 225-235.

\bibitem{HongEnochLee} S. Hong and K. S. Enoch Lee, Asymptotic behavior of eigenvalues of reciprocal power LCM matrices, Glasg. Math. J.
50 (2008) 163-174.

\bibitem{HongLoewy2004} S. Hong and R. Loewy, Asymptotic behavior of eigenvalues of
greatest common divisor matrices, Glasg. Math. J. 46 (2004) 303-308.

\bibitem{HongLoewy2011} S. Hong and R. Loewy, Asymptotic behavior of the smallest eigenvalue of matrices associated
with completely even functions (mod $r$), Int. J. Number Theory 7 (2011)
1681-1704.

\bibitem{HornJohnson} R. Horn and C. R. Johnson, Matrix
Analysis, Cambridge University Press, Cambridge, London, 1985.

\bibitem{IlmonenHaukkanenMerikoski} P. Ilmonen, P. Haukkanen and J. K. Merikoski, On eigenvalues of meet and join matrices associated with incidence functions, Linear Algebra Appl. 429 (2008) 859-874.

\bibitem{Kalman} D. Kalman, A matrix proof of Newton's identities, Math. Mag. 73 (4) (2000) 859-874.

\bibitem{KorkeeHauk2003} I. Korkee and P. Haukkanen, On meet and join matrices associated with incidence functions, Linear Algebra Appl. 372 (2003) 127-153.

\bibitem{Lindqvist} P. Lindqvist and K. Seip, Note on some greatest common divisor matrices,
Acta Arith. 84.2 (1998) 149-154.

\bibitem{Mattila2015} M. Mattila, On the eigenvalues of combined meet and join matrices, Linear Algebra Appl. 466 (2015) 1-20.

\bibitem{Mattila2012} M. Mattila, P. Haukkanen, On the eigenvalues of certain number-theoretic matrices, International Conference in Number Theory and Applications 2012.

\bibitem{Mattila2012paper} M. Mattila, P. Haukkanen, On the eigenvalues of certain number-theoretic matrices. East-West J. Math. 14 (2012), no. 2, 121-130.

\bibitem{MattilaHauk2014} M. Mattila and P. Haukkanen, On the positive definiteness and eigenvalues of meet and join matrices, Discrete Math. 326 (2014) 9-19.

\bibitem{Smith} H. J. S. Smith, On the value of a certain artihmetical determinant,
Proc. London Math. Soc. Ser.1 7 (1876) 208-212.

\bibitem{SuliMayers} E. S\"{u}li and D. Mayers, An Introduction to Numerical Analysis, Cambridge University Press, Cambridge, London, 2003.

\bibitem{Wintner} A. Wintner, Diophantine approximations and Hilbert's space, Amer. J.
Math. 66 (1944) 564-578.

\end{thebibliography}
\end{document}